\documentclass[a4paper]{amsart} 
\usepackage{graphicx}
\usepackage{amsmath,amssymb,textcomp,graphicx,amsthm} 
\usepackage{epsfig}

\usepackage{color}
\usepackage[alphabetic]{amsrefs}
 \usepackage{verbatim}



\newcommand{\e}{\varepsilon}
\newcommand{\inflap}{\lap_\infty}

\newtheorem*{thm}{Theorem}

\newcommand{\dd}{\partial}

\newcommand{\dist}{\operatorname{dist}}
\newcommand{\lap}{\Delta}

\renewcommand{\div}{\operatorname{div}}

\title{The $\infty$-harmonic potential is not always an $\infty$-eigenfunction}
\author{Erik Lindgren}

\begin{document} 
\begin{abstract}\noindent
In this note we prove that there is a convex domain for which the $\infty$-harmonic potential is not a first $\infty$-eigenfunction.
\end{abstract}
\maketitle

\noindent This note is devoted to a counter example. For a bounded domain $\Omega$, a non-negative function $u\in C(\overline\Omega)$, vanishing on $\dd\Omega$, is said to be a \emph{first $\infty$-eigenfunction} if 
\begin{equation}\label{eq:eig}
\max \left(\Lambda  -\frac{|\nabla u(x)|}{u(x)},\inflap u (x)\right)=0\quad \textup{in $\Omega$,}
\end{equation}
where
$$
\inflap u =\sum_{i,j=1}^n\frac{\dd u}{\dd x_i}\frac{\dd u}{\dd x_j}\frac{\dd^2 u}{\dd x_i \dd x_j},
$$
is the infinity Laplace operator and 
$$
\Lambda = \left(\sup \{r: B_r(x)\subset \Omega \text{ for some $x\in \Omega$}\}\right)^{-1}
$$
Equation \eqref{eq:eig} has to be understood in the viscosity sense. This $\infty$-eigenvalue problem arises as the limit of the eigenvalue problem for the $p$-Laplace operator,
$$
\div (|\nabla u|^{p-2}\nabla u)+\lambda_p |u|^{p-2}u=0,
$$ and was first studied in \cite{JLM99}.

The \emph{distance function}, $\delta (x)=\dist(x,\dd\Omega)$ and the set where it attains its maximum value, the \emph{High ridge} $\Gamma = \{x:\delta (x)=\max_x\delta(x)\}$, play central roles. In general, a first $\infty$-eigenfunction is not unique up to a multiplicative constant (see \cite{HSY12} for a counter example). However, for certain geometries where $\delta$ is the first $\infty$-eigenfunction this is indeed the case, see \cite{Yu07}. This includes for instance the ball and the stadium  (the convex hull of two balls with the same radii), but not the square. In \cite{JLM99}, it is shown that $\delta$ is not a first $\infty$-eigenfunction in the square.
It has been suggested that the so-called $\infty$-harmonic potential, the solution $v$ of
\begin{equation}\label{eq:infpot}
\left\{ \begin{array}{lr}\inflap v = 0 \text{ in $\Omega\setminus \Gamma$,}\\
 v = 0 \text{ on $\dd\Omega$,}\\
 v = 1 \text{ on $\Gamma$}.
 \end{array}\right. 
\end{equation}
is a first $\infty$-eigenfunction. Indeed, this is true when $\Omega$ is a ball or a stadium (cf. \cite{Yu07}), but it is not known whether this is true or not for the square. Below we show that there is a convex domain for which the $\infty$-harmonic potential is not a first $\infty$-eigenfunction.
\begin{figure}[!ht]
\begin{center}
     \input{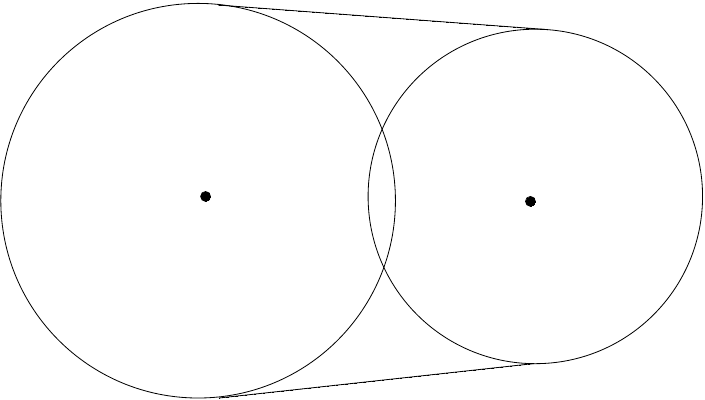_t}
 \caption{$S_\e$}
   \label{fig:S}  
\end{center}
\end{figure}

For $\e\geq 0$ let 
$$S_\e=B_{1+\e}(-1,0)\cup B_1(1,0)\cup R,$$
where $B_r(x)$ denotes the ball of radius $r$ centered at $x$ and $R$ the region between the two lines joining the points $(-1,\pm (1+\e))$ and $(1,\pm 1)$ (see Figure \ref{fig:S}). In particular, we see that $S_0$ is a stadium, i.e., the convex hull of the set $B_1(1,0)\cup B_1(-1,0)$. We note that in $S_\e$, the High ridge consists only of the point $(-1,0)$ for any $\e>0$, while for $\e=0$, it is $\{(t,0):t\in [-1,1]\}$. This is the key observation.

\begin{thm} For any $\e$ small enough, the $\infty$-harmonic potential $v_\e$ in $S_\e$ (as defined in \eqref{eq:infpot}), is not a first $\infty$-eigenfunction.
\end{thm}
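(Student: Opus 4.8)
The plan is to reduce ``being a first $\infty$-eigenfunction'' to a single pointwise gradient inequality, and then to run the perturbation $\e\to0^+$, where the key observation forces the limiting potential to be the \emph{wrong} function.

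\emph{Step 1: a pointwise reduction.} Since $v_\e$ is $\infty$-harmonic in $S_\e\setminus\Gamma$ with $\Gamma=\{(-1,0)\}$, Savin's planar regularity theorem gives $v_\e\in C^1(S_\e\setminus\Gamma)$, while the strong maximum and minimum principles give $0<v_\e<1$ there. I claim that
\[
v_\e \text{ solves } \eqref{eq:eig}\ \iff\ |\nabla v_\e(x)|\ge \Lambda\, v_\e(x)\quad\text{for every } x\in S_\e\setminus\Gamma .
\]
In the viscosity reading of \eqref{eq:eig} the subsolution inequality is automatic: if $\phi$ touches $v_\e$ from above then $\inflap\phi\ge0$ (as $v_\e$ is $\infty$-subharmonic), so $\max(\Lambda-|\nabla\phi|/\phi,\inflap\phi)\ge0$. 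For the supersolution inequality one tests with $\phi$ from below; then $\inflap\phi\le0$ automatically ($v_\e$ is $\infty$-superharmonic), while the remaining requirement $|\nabla\phi|\ge\Lambda\phi$ becomes, because $v_\e\in C^1$ forces $\nabla\phi(x)=\nabla v_\e(x)$, exactly the displayed bound. The excluded point $(-1,0)$ is harmless: $v_\e$ has a downward cone there, so no smooth $\phi$ touches it from below and the supersolution condition is vacuous.

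\emph{Step 2: the limit $\e\to0^+$.} By comparison with cones one has the uniform bound $0\le v_\e\le\Lambda\,\dist(\cdot,\dd S_\e)$ together with interior equi-Lipschitz estimates, so by stability of $\infty$-harmonic functions (uniform convergence through barriers, upgraded to $C^1_{\mathrm{loc}}$ by Savin compactness) $v_\e\to w$ as $\e\to0^+$, where $w$ is the $\infty$-harmonic potential of the stadium $S_0$ carrying datum $1$ at the \emph{single} point $(-1,0)$ and $0$ on $\dd S_0$. Here the key observation enters decisively: $w$ is \emph{not} the distance function $\delta$, because the strong maximum principle gives $w<1$ throughout $S_0\setminus\{(-1,0)\}$, whereas $\delta\equiv1$ on the entire high ridge $[-1,1]\times\{0\}$ of the stadium.

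\emph{Step 3: the contradiction.} Suppose $v_\e$ solved \eqref{eq:eig} along a sequence $\e_k\to0$. By Step 1, $|\nabla v_{\e_k}|\ge\Lambda_{\e_k}\,v_{\e_k}$ on $S_{\e_k}\setminus\Gamma$; since $\Lambda_{\e_k}=(1+\e_k)^{-1}\to1$ and $v_{\e_k}\to w$ in $C^1_{\mathrm{loc}}(S_0\setminus\{(-1,0)\})$, letting $k\to\infty$ gives $|\nabla w|\ge w$ on $S_0\setminus\{(-1,0)\}$. Applying the reduction of Step 1 to $w$ in the stadium (where $\Lambda=1$) then shows that $w$ is a first $\infty$-eigenfunction of $S_0$. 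But for the stadium the first $\infty$-eigenfunction is unique up to a multiplicative constant and equals $\delta$ (cf.\ \cite{Yu07}); normalising by the common maximum value $1$ forces $w=\delta$, contradicting $w\ne\delta$ from Step 2. Hence $v_\e$ cannot solve \eqref{eq:eig} for all small $\e$.

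\emph{Main obstacle.} The delicate point is the $C^1_{\mathrm{loc}}$ stability of Step 2, needed to pass the gradient inequality to the limit: one must control $\nabla v_\e$ uniformly up to (but not at) the collapsing ridge, e.g.\ by combining the cone bound $v_\e\le\Lambda\delta$ with interior gradient estimates for $\infty$-harmonic functions, and one must verify the viscosity bookkeeping of Step 1 carefully at the singular point $(-1,0)$. Intuitively the obstruction is transparent: the eigenfunction inequality forces $v_\e\le e^{-\Lambda\,\dist(\cdot,\Gamma)}$ along the symmetry axis, i.e.\ a decay rate dictated by the \emph{large} inradius $1+\e$, which is incompatible with the slow decay the potential must exhibit in order to stay positive across the wide middle of $S_\e$.
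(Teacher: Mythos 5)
Your argument has the same skeleton as the paper's: assume $v_{\e_k}$ is a first $\infty$-eigenfunction along a sequence $\e_k\to 0$, extract a uniform limit, identify the limit with $\delta$ via Yu's uniqueness theorem for the stadium \cite{Yu07}, and contradict this with the strong maximum principle \cite{LM95} applied to the limit, which is $\infty$-harmonic in $S_0\setminus\{(-1,0)\}$ and hence strictly below $1$ at $(1,0)$, while $\delta(1,0)=1$. Where you genuinely diverge is in how the eigenfunction property is transported to the limit. The paper does this in one line: uniform Lipschitz bounds come from the $L^\infty$-Rayleigh-quotient minimality of first eigenfunctions, and equation \eqref{eq:eig} is stable under uniform convergence by standard viscosity arguments; the paper also never needs to identify the limit as the potential of the punctured stadium, only that it is $\infty$-harmonic there. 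You instead reduce \eqref{eq:eig}, via Savin's planar $C^1$ regularity, to the pointwise inequality $|\nabla v_\e|\ge \Lambda v_\e$, and then must invoke Evans--Savin-type interior $C^{1,\alpha}$ estimates to get $C^1_{\mathrm{loc}}$ compactness and pass this inequality to the limit. This route does work in two dimensions, but it is dimension-specific and much heavier than necessary: the condition $\Lambda\phi-|\nabla\phi|\le 0$ for test functions touching from below is itself stable under mere uniform convergence, so the entire $C^1$ apparatus can be deleted. (Also note a small point in your forward direction: a $C^1$ function need not admit a smooth touching function from below at \emph{every} point, so you get $|\nabla v_\e|\ge\Lambda v_\e$ first on a dense set and then everywhere by continuity of $\nabla v_\e$.)

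There is one genuine gap in Step 1 as written, and it matters because it recurs in Step 3 in the direction of the equivalence you actually use. The subsolution half of \eqref{eq:eig} at the ridge point $p=(-1,0)$ is \emph{not} ``automatic because $v_\e$ is $\infty$-subharmonic'': $v_\e$ is $\infty$-harmonic only in $S_\e\setminus\{p\}$, and at $p$ it has a cone-type maximum and is $\infty$-superharmonic there, not subharmonic (a cone $1-c|x-p|$ fails the subsolution test at its vertex: one can touch it from above with $\nabla\phi(p)\ne 0$ and $\inflap\phi(p)<0$). You correctly dispose of the \emph{super}solution condition at $p$ (the strict downward cone, which follows from comparison with cones in punctured balls together with the strong maximum principle, makes touching from below impossible), but the subsolution condition at $p$ needs a separate argument: comparison with the $\infty$-harmonic cone $1-\Lambda|x-p|$ in the punctured ball $B_{1/\Lambda}(p)\subset S_\e$ gives $v_\e\ge 1-\Lambda|x-p|$, so any smooth $\phi$ touching $v_\e$ from above at $p$ satisfies $|\nabla\phi(p)|\le\Lambda=\Lambda\,\phi(p)$, making the first term in the max in \eqref{eq:eig} nonnegative. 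The identical patch is required when you verify in Step 3 that $w$ is an eigenfunction of $S_0$ (there $B_1(p)\subset S_0$ and $\Lambda=1$), since that is precisely where the ``if'' direction of your reduction is applied at the singular point. With these repairs your proof is correct; it simply re-derives through regularity theory what viscosity stability gives for free.
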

\begin{proof} Suppose towards a contradiction that there is a sequence $\e_j\to 0$, such that $v_j=v_{\e_j}$ is a first $\infty$-eigenfunction. Then, since the $v_j$'s are all first eigenfunctions, they have a uniformly bounded Lipschitz norm, globally in $\overline S_{\e_j}$ (any first $\infty$-eigenfunction minimizes the $L^\infty$-Rayleigh quotient). Hence, we can, by the Arzela-Ascoli theorem, extract a subsequence, again labelled  $v_j$, converging uniformly to a limit function $v_0$ in $\overline S_0$. By standard arguments for viscosity solutions it follows that $v_0$ is a first $\infty$-eigenfunction in $S_0$. From the uniqueness theorem in \cite{Yu07}, we know that $v_0=\delta$ and thus $v_0(1,0)=1$.

In addition, since $\inflap v_j=0$ in $S_\e\setminus (-1,0)$, it follows that $\inflap v_0=0$ in $S_0\setminus (-1,0)$. Then the strong maximum principle for $\infty$-harmonic functions (see for instance \cite{LM95}) implies $v_0(1,0)<1$, which contradicts $v_0=\delta$.
\end{proof}
\bibliographystyle{amsrefs}
\bibliography{countref}
\end{document}